\newif\ifAMS
\AMStrue\usepackage{amssymb}}
\theoremstyle{plain}
\newtheorem*{Besicovitch}{Besicovitch's lemma}
\newtheorem{Thm}{Theorem}[section]
\newtheorem{Lem}[Thm]{Lemma}
\theoremstyle{definition}
\newtheorem{Def}{Definition}
\theoremstyle{remark}
\newtheorem{Qu}[Thm]{Question}
\newcommand{\interior}{^{ \kern-5pt ^\circ}}
\begin{document}
\title
{Contracting thin disks}

\author
{Panos Papasoglu }

\subjclass{53C23}

\email {} \email {papazoglou@maths.ox.ac.uk}

\address
{Mathematical Institute, University of Oxford, 24-29 St Giles',
Oxford, OX1 3LB, U.K.  }

\address
{ }

\begin{abstract} We answer a question of Liokumovich-Nabutovsky-Rotman showing
that if $D$ is a Riemannian 2-disc with boundary length $L$, diameter $d$ and area $A\ll d$ then
$D$ can be filled by a homotopy $\gamma _t$ with $|\gamma _t|$ bounded by $L+2d+O(\sqrt A)$.

\end{abstract}
\maketitle
\section{Introduction}

Gromov in \cite{Gr} asked whether for any Riemannian 2-disc of boundary length $L$ and diameter $d$ there
is a homotopy contracting the boundary to a point such that the lengths of the intermediate curves in the homotopy
are bounded by a linear function $f(d,L)$. This is related to a (still open) question of geometric group theory on the
relationship between the diameter and the area of a van-Kampen diagram.

Frankel and Katz \cite{FK} showed that for any $n\in \mathbb N$ there is a 2-disc with $L=1$ and $d=1$ such that
for any homotopy contracting the boundary to a point there is some intermediate curve of length greater than $n$, so they
answered Gromov's question in the negative. They asked further whether one can give a positive answer to Gromov's question
if one asks for a linear function depending also on the area of the disc. 

Liokumovich, Nabutovsky and Rotman \cite{LNR} gave a positive answer to this and obtained quite precise bounds on the
lengths of curves of `optimal' contracting homotopies in terms $d,L$ and the area of the disc, $A$. However they note that their bound 
does not appear to be best possible when $d$ is fixed and $A\to 0$ (the case of `thin' discs). More precisely they show that
there are homotopies $\gamma _t$ such that $|\gamma _t|$ is bounded by $2L+2d+O(A)$ and they ask if this can be improved
to $L+2d+O(A)$. We show here that this is indeed the case. 

I would like to thank A. Nabutovsky for pointing out that a problem I posed in a previous version of this paper was already solved.

%\begin{profile} Let $\Gamma $ be a doubling planar graph. Then there is a constant $\alpha $ so that
%for every vertex $v$ of $\Gamma $ there is a finite domain $\Omega
%$ such that $B(v,n)\subset \Omega $, $\bd \Omega \subset B(v,6n)$
%and $|\bd \Omega |\leq \alpha n$.
%\end{profile}

%\begin{Def} Let $\Gamma $ be a locally finite graph and let $V(n)$
%be the volume function of $\Gamma $. Then the \emph{isoperimetric
%profile} function of $\Gamma $, $I_{\Gamma }:\Bbb N \to \Bbb N $
%is defined by:
%$$ I_{\Gamma }(n)=\underset {\Omega} { \inf } \{|\bd \Omega|:
%\Omega\subset \Gamma,\, |\Omega|\leq n \}$$ where $\Omega $ ranges
%over all subgraphs of $\Gamma$.
%\end{Def}
%\begin{Cor} Let $\Gamma $ be a doubling planar graph with volume
%function $V(n)$ and isoperimetric profile function $I_{\Gamma
%}(n)$.
%
%Let $\varphi (n)=\inf \{k: \,V(k)\geq n \}$. Then there is a
%constant $\alpha  $ such that $$I_{\Gamma }(n)\leq \alpha \varphi
%(n) \text { for all } n\in \Bbb N$$
%\end{Cor}

%\subsection{Idea of the proof}

\section{Contracting thin discs}

If $X$ is a metric space and $\gamma :[0,1]\to X$ is a path we denote by $|\gamma |$ the length of $\gamma $.
If $c\in X$ and $r>0$ we denote by $B(c,r)$ the closed ball with center $c$ and radius $r$.

We recall a definition from \cite{LNR}:

\begin{Def} Let $D$ be a Riemannian 2-disc. The 
\textit{path diastole} of $D$ is: \smallskip

$pdias(D) = \sup _{p,q\in \partial D}\, \inf _{(\gamma _t)}\, |\gamma _t | $ \smallskip

\noindent where $\gamma _t$ runs over all families of paths from $p$ to $q$, $\gamma _t: [0, 1]\to D $ with $\gamma _t(0) = p$, $\gamma _t(1) = q$,
and $\gamma _0\cup -\gamma _1=\partial D$.
\end{Def}

We will use a result from \cite{LNR} (theorem 1.6, B,C):

\begin{Thm} \label{diastole}Let $D$ be a Riemannian disc with diameter $d$. The following inequality holds:

$$pdias (D)\leq 2|\partial D|+686 \sqrt {Area (D)}+2d .$$
\end{Thm}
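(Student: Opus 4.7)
Fix a basepoint $p\in\partial D$ and parameterise $\partial D$ by arc length as $\sigma\colon[0,L]\to\partial D$ with $\sigma(0)=\sigma(L)=p$. For each $s\in[0,L/2]$, the boundary points $\sigma(s)$ and $\sigma(L-s)$ have $D$-distance at most $d$, so I would pick a minimising path $\mu_s$ joining them, with $|\mu_s|\le d$. The \emph{formal} sweep of loops based at $p$,
\[
\gamma_s\;=\;\sigma|_{[0,s]}\cdot\mu_s\cdot\sigma|_{[L-s,\,L]},
\]
interpolates from the trivial loop at $s=0$ to $\partial D$ at $s=L/2$, with $|\gamma_s|\le 2s+d\le L+d$; that this already sits below the target bound $L+2d+O(\sqrt A)$ is the guiding observation.

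The only obstruction is that $s\mapsto\mu_s$ is in general not continuous. I would discretise $0=s_0<s_1<\cdots<s_N=L/2$ with small step $\epsilon$, and between consecutive times glue in a subhomotopy supported in the quadrilateral region $Q_i\subset D$ whose four sides are $\mu_{s_i}$, $\mu_{s_{i+1}}$, and the two short boundary arcs $\sigma|_{[s_i,s_{i+1}]}$, $\sigma|_{[L-s_{i+1},L-s_i]}$ of length $\epsilon$ (after a standard perturbation making the $\mu_s$ generically disjoint). The $Q_i$ essentially partition $D$, so $\sum\mathrm{Area}(Q_i)\le A$. Realising the subhomotopy amounts to running a path-diastole sweep of $Q_i$ between the two boundary arcs from $\sigma(s_i)$ to $\sigma(L-s_i)$.

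The main technical obstacle is bounding this $Q_i$-sweep sharply. Applying Theorem \ref{diastole} directly to $Q_i$ gives $pdias(Q_i)\le Cd+O(\sqrt{A_i})$ for some absolute constant $C>2$, since $Q_i$ may have perimeter and intrinsic diameter each comparable to $2d$; this is too weak by a constant factor. What is needed is a refined estimate
\[
pdias(Q_i)\;\le\;2d+O(\sqrt{A_i})+O(\epsilon),
\]
exploiting that $Q_i$ is a very thin strip (area $\le A\ll d$). Intuitively, any sweep can hug one of the long sides $\mu_{s_i},\mu_{s_{i+1}}$ with excursions of size only $O(\sqrt{A_i})$; the natural tool is a Besicovitch-type width estimate on thin subregions of $Q_i$, combined with a gluing argument ensuring the $\sqrt{A_i}$ errors consolidate into a single $O(\sqrt A)$ error rather than accumulating over $i$.

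Granted such a refined bound, the active loop during each subhomotopy decomposes as two outer boundary arcs (total length $\le 2s_i\le L+O(\epsilon)$) together with a path inside the current $Q_i$ (length $\le 2d+O(\sqrt A)+O(\epsilon)$), giving $L+2d+O(\sqrt A)+O(\epsilon)$ overall; letting $\epsilon\to 0$ yields the desired bound. The hard part is therefore isolating the thin-disc sweep estimate for the $Q_i$'s, which is where the hypothesis $A\ll d$ is used decisively.
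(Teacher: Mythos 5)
First, a point of context: Theorem \ref{diastole} is not proved in this paper. It is quoted verbatim as Theorem~1.6(B,C) of Liokumovich--Nabutovsky--Rotman \cite{LNR}, so there is no ``paper's own proof'' to compare against. Your task was therefore really to reconstruct an argument from \cite{LNR}, which is a substantial piece of work on its own.

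Evaluating your sketch on its merits, the central difficulty is exactly the one you flag and then do not resolve. The step
\[
pdias(Q_i)\;\le\;2d+O\bigl(\sqrt{A_i}\bigr)+O(\epsilon)
\]
is not a ``technical refinement'' that one can expect to extract from a thinness heuristic plus Besicovitch; it is essentially a quantitative sweep-out theorem for a disc, and one of comparable strength to what you set out to prove. Concretely, $Q_i$ is bounded by two geodesics of length up to $d$ and two tiny arcs; its \emph{intrinsic} diameter can still be of order $d$, and its shape need not be anything like a flat strip -- the two long sides can wander, come back near each other, separate again, and so on. The intuition that a path ``can hug one of the long sides with $O(\sqrt{A_i})$ excursions'' does not by itself produce a continuous family of paths interpolating from one side of $Q_i$ to the other; that is a global topological sweep-out statement, and Besicovitch's inequality is a lower bound on area in terms of widths, not a machine for constructing homotopies. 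So the core of the argument is assumed rather than proved, and the assumed ingredient is not appreciably easier than the theorem itself.

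There are further gaps. Your claim that the regions $Q_i$ ``essentially partition $D$'' and hence $\sum \mathrm{Area}(Q_i)\le A$ is not justified: nothing forces the minimising paths $\mu_{s_i}$ to be pairwise disjoint or nested, and a generic perturbation makes them transverse, not disjoint. When $\mu_{s_i}$ and $\mu_{s_{i+1}}$ cross, the region between them is not a topological quadrilateral, and the regions for distinct $i$ can overlap badly, so you lose both the area accounting and the ability to concatenate the subhomotopies. You would need a genuinely monotone (nested) family of chords; producing one is itself a nontrivial step (it is related to the Birkhoff-type curve-shortening / sweep-out constructions used in \cite{LNR}). Finally, your construction only treats the case $p=q$ in the definition of $pdias$, whereas the supremum ranges over all pairs $p,q\in\partial D$, where the sweep is between the two boundary arcs determined by $p$ and $q$; this needs a separate (if similar) treatment. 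A smaller observational point: you are aiming at the sharp bound $L+2d+O(\sqrt A)$, which is the main theorem of \emph{this} paper, not the coarser $2L+2d+686\sqrt A$ actually asserted in Theorem \ref{diastole}; that would be fine if the argument closed, but it suggests the two statements have been conflated.

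In short, the plan correctly identifies the right geometric picture (boundary-chord sweep plus filling of thin gaps), but the decisive ``thin strip sweep'' lemma is asserted, not proved, and the nestedness/partition hypotheses on the $\mu_{s_i}$ and $Q_i$ are used without justification. As written, the argument does not yet constitute a proof.
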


We recall also (\cite{Be}) :

\begin{Besicovitch}\label{L:Besicovitch}  Let $D$ be a Riemannian 2-disc and let $\gamma =\partial D$. Suppose
$\gamma $ is split into 4 subpaths, $\gamma =\alpha _1\cup \alpha
_2 \cup \alpha _3 \cup \alpha _4$. Let $d_1=d(\alpha _1, \alpha
_3)$, $d_2=d(\alpha _2, \alpha _4)$
 Then
$$Area(D)\geq d_1d_2$$
\end{Besicovitch}

We state now our main result:

\begin{Thm} Let $D$ be a Riemannian disc with diameter $d$, boundary length $L$ and area $A$.
Then for any point $x\in \partial D$ there is a homotopy based at $x$, $\gamma _t \,$ $(t\in [0,1])$
contracting $\partial D$ to $x$ such that for all $t$
$$ |\gamma _t|\leq L+2d+1000\sqrt A $$

\end{Thm}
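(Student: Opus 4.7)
My plan is to sweep the disc by a continuous family of short cross-cuts rather than apply Theorem~\ref{diastole} directly. The LNR bound $2L+2d+O(\sqrt A)$ arises when Theorem~\ref{diastole} is used with $p=q=x$: the factor $2L$ appears because each intermediate path in the path diastole may have to traverse $\partial D$ twice. In the construction below each intermediate loop will contain only one prefix of one boundary arc, one short cross-cut, and one suffix of the other boundary arc, so the full boundary is traversed only once and the saving of $L$ is structural.

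In detail, I would first pick $y \in \partial D$ with $d(x,y)=d$ and a geodesic $\sigma$ from $x$ to $y$ in $D$ of length $\leq d$, splitting $\partial D$ into arcs $\alpha_1,\alpha_2$ from $x$ to $y$. For each $r \in [0,d]$ I would construct a cross-cut $\tau_r \subset D$ of length $O(\sqrt A)$ with endpoints on $\alpha_1$ and $\alpha_2$ respectively, placed near the point $\sigma(r)$. Short cross-cuts at ``most'' levels $r$ follow from Besicovitch's lemma applied to sub-discs of $D$ along $\sigma$: a long stretch where no short cross-cut exists would mean the disc contains a rectangle-like region with both transverse distances large, contradicting $d_1 d_2 \leq A$ under the thinness hypothesis $A \ll d$.

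The contracting homotopy $\gamma_t$ is then defined so that at time $t$ its loop is the concatenation (prefix of $\alpha_1$ from $x$ to $\alpha_1\cap\tau_{r(t)}$)$\cdot\tau_{r(t)}\cdot$(suffix of $\alpha_2^{-1}$ from $\alpha_2\cap\tau_{r(t)}$ back to $x$), with $r(t)$ decreasing continuously from $d$ (giving $\gamma_0 = \partial D$) to $0$ (giving $\gamma_1 = x$). Its length is (portion of $\alpha_1$)$+|\tau_{r(t)}|+$(portion of $\alpha_2$), which is $\leq L + O(\sqrt A)$ at good times; the $2d$ term enters to bridge ``bad'' levels $r$ where no short cross-cut exists, by detouring along $\sigma$ to a neighbouring good level, for a round-trip cost of at most $2d$. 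The hard part, and the technical heart of the argument, is Step~2: producing the cross-cuts as a continuously varying family and showing that the detour cost summed over all bad levels contributes at most $2d+O(\sqrt A)$. I expect this to need a delicate combined use of Besicovitch's lemma and Theorem~\ref{diastole} applied to the two sub-discs cut off by $\sigma$, together with a Morse-type cleanup of the distance function $d(x,\cdot)$ to ensure transversality and a well-defined continuous family of cross-cuts.
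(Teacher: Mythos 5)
Your plan is genuinely different from the paper's and, as written, has a real gap precisely where you flag uncertainty. The paper does not attempt to build a single global sweep-out by cross-cuts transverse to a geodesic $\sigma$. Instead it runs an induction on the boundary length $L$ in steps of $\sqrt A$: as long as $L>40\sqrt A$, a Besicovitch argument (Lemma~\ref{induct}) produces a short boundary arc $l_1$ with $|l_1|\leq 30\sqrt A$ admitting a chord $\alpha$ with $|\alpha|\leq|l_1|-\sqrt A$; one then applies Theorem~\ref{diastole} only to the small sub-disc bounded by $l_1\cup-\alpha$ (so its boundary length is $O(\sqrt A)$ and the $2|\partial D|$ term in the diastole bound is harmless), slides $l_1$ across to $\alpha$, strictly shortens the boundary, and repeats. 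The base case $L\leq 40\sqrt A$ is handled by a single application of Theorem~\ref{diastole}. This sidesteps all of the continuity and parametrization issues that your approach raises.

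Concretely, the parts of your argument that do not yet work are: (i) the distance $d(x,\cdot)$ restricted to either boundary arc $\alpha_i$ need not be monotone, so ``the prefix of $\alpha_1$ up to $\alpha_1\cap\tau_r$'' is not a well-defined, monotonically varying arc; (ii) even if short cross-cuts exist for almost every $r$, there is no reason for a measurable choice $r\mapsto\tau_r$ to vary continuously, and homotoping between two nearby but topologically differently placed cross-cuts is itself a filling problem of the same kind you are trying to solve (the argument risks circularity); (iii) the Besicovitch step is not set up correctly: to apply Besicovitch near $\sigma(r)$ you want a quadrilateral with two opposite sides on $\alpha_1,\alpha_2$ and two short ``level-set'' sides near $\sigma(r)$, but then the second pair of opposite sides is close together, so Besicovitch only yields $\mathrm{Area}\geq(\text{small})\cdot d(\alpha_1,\alpha_2)$, which gives no useful upper bound on the transverse distance unless you first control the width of the level-set sides; and (iv) the claim that the total detour cost over all bad levels is at most $2d+O(\sqrt A)$ is asserted, not proved, and a priori each disjoint bad interval could cost its own $O(d)$ excursion. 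You correctly identify that Step~2 is the crux; as it stands it is the entire theorem, not a technical cleanup. The paper's inductive boundary-shortening is precisely a way to avoid ever having to produce a continuous family of cross-cuts.
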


\begin{proof}
We will prove this by `induction' on the length $L$ of $\partial D$.
Indeed if $L\leq 40\sqrt A$ then by theorem \ref{diastole} there is a homotopy $\gamma _t$ contracting $\partial D$
with $$|\gamma _t|\leq L+2d+726 \sqrt A $$ so the theorem holds.

We assume now that the theorem holds if $L\leq k\sqrt A$ for some $k\in \mathbb N, k\geq 40$.
Let $D$ be a disc with boundary length $L$ satisfying: $$(k+1)\sqrt A\geq L> k\sqrt A\, .$$ We have the following lemma:

\begin{Lem} \label{induct} There is a subpath $l_1$ of $\partial D$ with $|l_1|\leq 30\sqrt A$ 
and a path $\alpha $ in $D$ with the same endpoints as $l_1$ such that $$|\alpha |\leq |l_1|-\sqrt A$$
We may assume further that $x\notin l_1$.
\end{Lem}
\begin{proof} Consider all subpaths $l$ of $\partial D$ which satisfy the following conditions:

1. $|l| \geq 30\sqrt A$.

2.  There is a path $\beta $ in $D$ joining the endpoints of $l$ with $|\beta |\leq 5\sqrt A$. 

3. $x\notin l$.

Clearly the set of such paths is non-empty, take e.g. $\partial D$ minus a small neighborhood of $x$.

Let $l_0$ be a path of minimal length with properties 1,2,3. Let $a,b$ be the endpoints of $l_0$ and let
$l_1$ be a subpath of $l_0$ with endpoints $a,c$ and length $15\sqrt A$. 
We consider $K=B(c, 5\sqrt A)\cap \partial D$. We claim that if $y\in K$ then $y\in l_0$. Indeed otherwise there is
a path of length $\leq 10\sqrt A$ joining the endpoints of $l_1$ so the assertion of the lemma holds.
It follows by the minimality of $l_0$ that if $y\in K$ then the subpath $[y,c]$ of $l_1$ has length $\leq 30\sqrt A$.
However if the length of $[y,c]$ is greater than $6\sqrt A$ the lemma clearly holds.

\begin{figure}[h]
%\begin{center}
\includegraphics{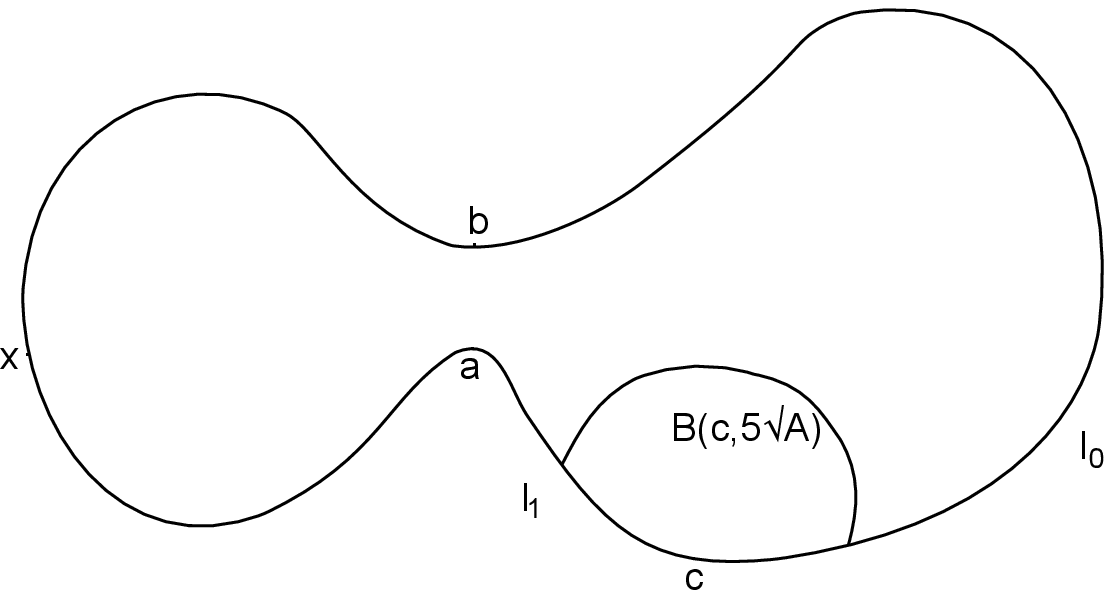}
%\end{center}
\vspace{0.1in}
 \caption{}
\end{figure}

We conclude that $K$ is contained in a subpath $\beta_1=[b_1,b_2]$ of $l_0$ with length $\leq 12 \sqrt A$ and with
$b_1,b_2\in K$. Given $\epsilon >0$ small it is easy to see that there is a path $\beta _2:[0,1]\to D$ in 
$B(c, 5\sqrt A+\epsilon)\setminus B(c, 5\sqrt A-\epsilon)$ with $b_1=\beta _2(0),b_2=\beta _2(1)$.

Let $$t_1=\sup \{t\in [0,1]:d(\beta _2(t), [b_1,c]\leq 2 \sqrt A \}\, ,$$
$$t_2=\inf \{t\in [0,1]:d(\beta _2(t), [c,b_2]\leq 2 \sqrt A \}.$$

Let $q_1$ be  a path of length $2\sqrt A$ joining $\beta _2(t_1)$ to some point $c_1\in [b_1,c]$ and let
 $q_2$ be  a path of length $2\sqrt A$ joining $\beta _2(t_2)$ to some point $c_1\in [c,b_2]$.

We consider now the following simple arcs:

$$\alpha _1=[c-3\sqrt A, c+3\sqrt A],\, \alpha _2=[c_1,c]\cup q_1$$

$$\alpha _3= \beta_2([t_1,t_2]),\, \alpha _4=[c,c_2]\cup q_2\,.$$

We claim that $d(\alpha _1, \alpha _3)\geq 2\sqrt A-\epsilon $. Indeed if $p$ is a path joining some
$z\in \alpha _3$ to a point on $\alpha_1$ then $d(c,z)\leq |p|+3\sqrt A $ so $|p|\geq 2\sqrt A-\epsilon $
since $z\notin B(c, 5\sqrt A-\epsilon)$.

Note also that $d(\alpha _2, \alpha _4)\geq \sqrt A$. Indeed if not then there is a path of length less than $ 5 \sqrt A$ in $D$
joining the points $c_1,c_2$. However in this case
the lemma clearly holds as the length of $[c_1,c_2]$ is greater than $6\sqrt A$.

\begin{figure}[h]
%\begin{center}
\includegraphics[width=5.0in ]{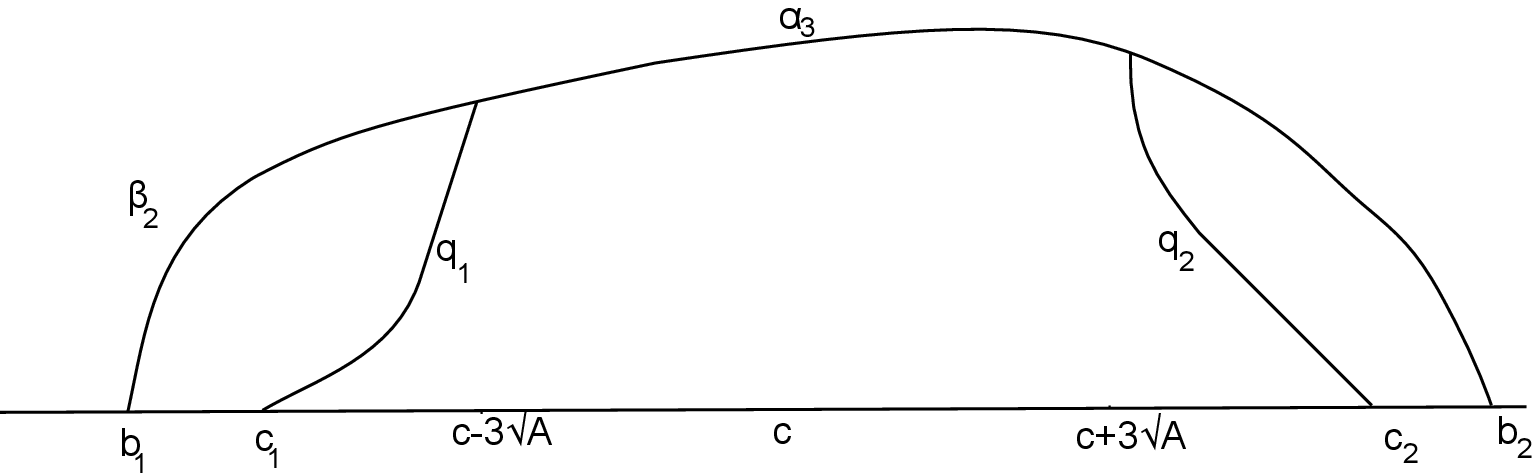}
%\end{center}
\vspace{0.1in}
 \caption{}
\end{figure}

Applying Besicovitch's lemma we have that the area enclosed by the four simple arcs $\alpha _1, \alpha _2, \alpha _3, \alpha _4$
is greater than $$(2\sqrt A-\epsilon)\sqrt A $$ which is greater than $A$ provided that $\epsilon $ is sufficiently small, a contradiction.
\end{proof}

We proceed now with the proof of the theorem. Let $l_1$ be a subpath of $\partial D$ as in the lemma and let $\alpha $ be a path in $D$ joining
$a,b$ as in the lemma. By theorem \ref{diastole} there is a path homotopy $\beta _t$ between $l_1$ and $\alpha $ such that
$$|\beta _t|\leq 2\cdot 59 \sqrt A+686 \sqrt A+2d $$
Let's say that $\partial D=l_1\cup -l_2$
Clearly we can extend $\beta _t$ to a homotopy $\gamma _t$ based at $x$ such that $\gamma _0=\partial D, \gamma _1=-l_2\cup \alpha $ and

$$|\beta _t|\leq |\partial D|+118 \sqrt A+686 \sqrt A+2d \leq 2d+L+1000\sqrt A .$$
As $$|l_1|+|\alpha |\leq k\sqrt A$$ we can extend $\gamma _t$ to a homotopy contracting the boundary to $x$ and satisfying the
same bound on the length of the intermediate curves.

\end{proof}

%\begin{Lem} Let $\alpha, \beta, \gamma :[0,1]\to \Bbb R^2$ paths
%with $\alpha (0)=\beta (0)=\gamma (0)$ and $\alpha (1)=\beta
%(1)=\gamma (1)$. If $P$ is a point in the plane which is not
%contained in the image of $\alpha ,\beta ,\gamma $ then $$w(\alpha
%\cup \gamma,P)=w $$
%\end{Lem}
%\begin{Thm}\label{T:profile} Let $\Gamma $ be a doubling planar graph. Then there is a constant $\alpha $ so that
%for every vertex $v$ of $\Gamma $ there is a finite domain $\Omega
%$ such that $B(v,n)\subset \Omega $, $\bd \Omega \subset B(v,6n)$
%and $|\bd \Omega |\leq \alpha n$.
%\end{Thm}

%\begin{Rems}  If $\Gamma $ corresponds to a tessellation then, the
%proof of the theorem shows that there is a simple closed curve of
%length $\sim n$ in $B(v,6n)$ that separates $B(v,n)$ from
%infinity. In the random context this has been shown by Krikun
%(\cite {Kr},\cite{Kr2}) for triangulations and quadrangulations.
%Our proof applies regardless of the `shape' of the regions of the
%tessellation. Note also that our proof works for any vertex of the
%tessellation while Krikun's result are for the root vertex.
%
% Given any $\epsilon >0$ one may easily adapt the proof of the
%theorem to produce regions $\Omega $ with the same properties such
%that $$\bd \Omega \subset B(v,(1+\epsilon)n)$$ for $n$
%sufficiently large. Of course in this case one obtains $|\bd
%\Omega|\leq C(\epsilon)n$ and $C(\epsilon)\to \infty $ as
%$\epsilon \to 0$.
%
%\end{Rems}

\section{Discussion}

%Finding short homotopies of a Riemannian disc is related to finding short curves cutting the disc (see \cite{LNR}, section 3). 
%The existence of a `short' homotopy implies the existence of a short arc cutting a `big' piece of a disc, however the converse is not obvious. So the counterexample given by Frankel-Katz does not clarify completely the situation. It is reasonable to ask the following:
%
%\begin{Qu} Are there constants $a>0,\,\, 1/2>b>0$ such that for any Riemannian 2-disc (or 2-sphere) $D$  of diameter 1 there is a simple arc (or simple closed curve) of length smaller than $a$
%splitting it into two regions of area greater than $b\cdot area(D)$?
%\end{Qu}
%
%Balacheff-Sabourau \cite{BS} showed that there is some $c>0$ such that any Riemannian surface $M$ of genus $g$ can be separated in two domains of equal area by a 1-cycle of length bounded by $c\sqrt {g+1}\sqrt {\rm{area}( M)}$. Liokumovich on the other hand announced that given $C>0$ and a closed surface $M$ there is a Riemannian metric of diameter 1 such that any 1-cycle splitting it into two regions of equal area has length greater than $C$. In particular one can not take
%$b=1/2$ above.
%
%It was shown in \cite{LNR} that if $D$ is a Riemannian 2-disc there is a simple
%arc of length bounded by $2\sqrt 3 \sqrt {\rm{area}( D)}+\delta $ which the disc into two regions of area greater than $\rm{area}( D)/4 -\delta $ where
%$\delta $ is any positive real. A similar result was shown in \cite{Pa} for the sphere. So we essentially ask above whether area can be substituted by diameter in these results.

One wonders whether the result of Liokumovich-Nabutovsky-Rotman \cite{LNR} can be extended to higher dimensions. In particular we may ask:

\begin{Qu} Let $B$ be a Riemannian 3-ball of diameter $d$, boundary area $area(\partial B)=A$ and volume $v$. Is it true that
there is a homotopy $S_t$ contracting $\partial B$ to a point such that $area (S_t)\leq f(d,A,v)$ for some function $f$?
\end{Qu}

We may ask for a weaker result too, namely if there is always a `small' disk splitting the ball into two `big' pieces:

\begin{Qu} Let $B$ be a Riemannian 3-ball of diameter $d$, boundary area $area(\partial B)=A$ and volume $v$. Is it true that
there is a disc $D$ splitting $B$ in two regions of volume $>v/4$  such that $area (D)\leq h(d,A,v)$ for some function $h$?
\end{Qu}

We remark that a similar problem in dimension 2 is quite well understood.
Balacheff-Sabourau \cite{BS} showed that there is some $c>0$ such that any Riemannian surface $M$ of genus $g$ can be separated in two domains of equal area by a 1-cycle of length bounded by $c\sqrt {g+1}\sqrt {\rm{area}( M)}$. Liokumovich on the other hand showed \cite{Li} that given $C>0$ and a closed surface $M$ there is a Riemannian metric of diameter 1 such that any 1-cycle splitting it into two regions of equal area has length greater than $C$. Finally Liokumovich-Nabutovsky-Rotman \cite{LNR} show that for any $\epsilon >0$ a disc (or a sphere) $D$ of area $A$ can be subdivided into two regions of area $>\dfrac{1}{3}A-\epsilon $ by a simple curve of length $<2\,diam(D)+\epsilon $.

One can obtain a Riemannian ball from a graph $\Gamma $ by thickening its edges-so all edges become say solid cylinders 
of volume and length 1. One can, say, replace vertices by balls and obtain a handlebody. From this one obtains a ball $B$ of
approximately the same volume by gluing thickened disks on an appropriate set of simple closed curves on the handlebody.
By picking $\Gamma $ to be an expander graph (more precisely a sequence of expander graphs) and remarking that a disk
splitting $B$ induces a splitting of $\Gamma $ one see that $h$ has to be at least linear on $v$. This contrasts with the
$2$-dimensional case where the dependence on area $A$ is of the order of $\sqrt A$. Of course it is not clear whether $h$ exists at all.

\end{document}
\bye